\makeatletter\numberwithin{equation}{section}
\theoremstyle{plain}\newtheorem{theorem}{Theorem}[section]
\newtheorem{lemma}[theorem]{Lemma}\makeatother
\begin{document}
\title[A new universal real flow of the Hilbert-cubical type]
{A new universal real flow of the\\Hilbert-cubical type}

\author[L. Jin]{Lei Jin}
\address{Lei Jin: Department of Mathematics, University of Science and Technology of China,
Hefei, Anhui 230026, China}
\email{jinleim@mail.ustc.edu.cn}

\author[S. Tu]{Siming Tu $^*$}\let\thefootnote\relax\footnote{* Corresponding author.}
\address{Siming Tu: School of Mathematics (Zhuhai), Sun Yat-sen University,
Zhuhai, Guangdong 519082, China}
\email{tusiming3@mail.sysu.edu.cn}

\subjclass[2010]{37B05, 54H20}\keywords{Hilbert cube, universal flow, equivariant embedding}

\begin{abstract}
We provide a new universal real flow of the Hilbert-cubical type.
We prove that any real flow can be equivariantly embedded in the translation on $L(\mathbb{R})^\mathbb{N}$,
where $L(\mathbb{R})$ denotes the space of $1$-Lipschitz functions $f:\mathbb{R}\to[0,1]$.
Furthermore, all those functions in $L(\mathbb{R})^\mathbb{N}$
that are images of such embeddings can be chosen as $C^1$-functions.
\end{abstract}

\maketitle

\section{Introduction}
By a \textit{flow} we understand a pair $(X,(T_t)_{t\in\mathbb{R}})$
(sometimes written as $(X,\mathbb{R})$),
where $X$ is a compact metric space and the map
$$T:\,\mathbb{R}\times X\to X,\,\;\,(t,x)\mapsto T_tx$$
is continuous.
We say that $(X,(T_t)_{t\in\mathbb{R}})$ can be \textit{embedded} in $(Y,(S_t)_{t\in\mathbb{R}})$
if there exists an equivariant topological embedding $\phi:X\to Y$,
namely, a homeomorphism $\phi$ of $X$ into $Y$ such that
$\phi(T_tx)=S_t\phi(x)$ for all $x\in X$ and $t\in\mathbb{R}$.
A flow is called \textit{universal} if
all flows can be embedded in it.

One can similarly apply the terminology to $\mathbb{Z}$-actions.
Since every compact metric space can be (topologically) embedded
in $[0,1]^\mathbb{N}$ (see \cite{M}),
there is a rather natural universal $\mathbb{Z}$-action:
the translation on the Hilbert cube
$([0,1]^\mathbb{N})^\mathbb{Z}$,
denoted by $(([0,1]^\mathbb{N})^\mathbb{Z},\sigma)$,
where $([0,1]^\mathbb{N})^\mathbb{Z}$ is equipped with the product topology and
$\sigma$ acts on it continuously by the translation:
$$\sigma((x_k)_{k\in\mathbb{Z}})=(x_{k+1})_{k\in\mathbb{Z}},\,\;\,(x_k\in[0,1]^\mathbb{N}).$$
Note that the Hilbert cube
$([0,1]^\mathbb{N})^\mathbb{Z}$
is a compact metric space.

Let us return to the case of $\mathbb{R}$-actions.
It is natural to seek a universal flow as well.
Since the Hilbert cube $([0,1]^\mathbb{N})^\mathbb{Z}$ can be written as
$([0,1]^\mathbb{Z})^\mathbb{N}$ and $[0,1]^\mathbb{Z}$
can be viewed as the space of continuous functions
$f:\mathbb{Z}\to[0,1]$,
we may consider the candidate $C(\mathbb{R})^\mathbb{N}$,
where $C(\mathbb{R})$ is the space of continuous functions $f:\mathbb{R}\to[0,1]$
equipped with the topology of uniform convergence on compact subsets of $\mathbb{R}$,
given by the distance
\begin{equation}\label{distance}
\sum_{n=1}^\infty\frac{1}{2^n}\max_{|t|\le n} |f(t)-g(t)|,\,\;\,(f,g\in C(\mathbb{R})).
\end{equation}
We let the group $\mathbb{R}$ act on $C(\mathbb{R})$ continuously by the translation:
\begin{equation}\label{translation}
\mathbb{R}\times C(\mathbb{R})\to C(\mathbb{R}),\,\;\,(s,f(t))\mapsto f(t+s).
\end{equation}
In the same way as $\mathbb{Z}$-actions,
we can embed all flows in the translation on the product space $C(\mathbb{R})^\mathbb{N}$.
From this point of view,
the space $C(\mathbb{R})^\mathbb{N}$ seems to be a natural correspondence
to the discrete case of the Hilbert cube $([0,1]^\mathbb{N})^\mathbb{Z}$
with the same universal property.
However, there is a drawback:
The space $C(\mathbb{R})^\mathbb{N}$ is neither compact nor locally compact.
So it is actually not a ``flow'' in the definition.

Given this motivation,
we were interested in finding a universal flow
whose state space is a compact subspace of $C(\mathbb{R})^\mathbb{N}$.
This poses the following question:
\begin{enumerate}
\item[$\bullet$]
Is there an ``explicit'' compact invariant subset of $C(\mathbb{R})^\mathbb{N}$
that is universal?
\end{enumerate}
Here ``explicitness'' means that we may easily characterize
all elements in the space that we choose.
Answering this question affirmatively,
\cite{GJ} constructed a compact invariant subset
$\prod_{n\in\mathbb{N}}B_1V_{-n}^n\subset C(\mathbb{R})^\mathbb{N}$
that is universal under the action of translation,
where $B_1V_{-n}^n$ denotes the collection of all elements
in $C(\mathbb{R})$ whose Fourier transform
(considered as a tempered distribution) is supported in $[-n,n]$.
However, in contrast to the Hilbert cube $([0,1]^\mathbb{N})^\mathbb{Z}$,
it is not so simple, and it is not a countable self-product.

We expect such a universal flow, in addition, to be a self-product
(and then it will be ``closer'' to the Hilbert cube $([0,1]^\mathbb{N})^\mathbb{Z}$).
More precisely, our problem appears as follows:
\begin{enumerate}
\item[$\bullet$]
Is there an ``explicit'' compact invariant subset $F$ of $C(\mathbb{R})$
such that $F^\mathbb{N}$ is universal under the action of translation?
\end{enumerate}

The purpose of this paper is to solve this problem affirmatively.
We provide a new explicit universal flow that is of the Hilbert-cubical type
(i.e., a universal flow of the form $F^\mathbb{N}$
with a compact invariant subset $F\subset C(\mathbb{R})$).
As we mentioned previously,
$[0,1]^\mathbb{Z}$ can be regarded as
the space of continuous functions
$f:\mathbb{Z}\to[0,1]$;
and moreover,
an easy observation reveals that
all functions $f:\mathbb{Z}\to[0,1]$
are automatically $1$-Lipschitz,
meaning that
$|f(x)-f(y)|\le|x-y|$ for all $x,y\in\mathbb{Z}$.
Thus, for flows we may consider the space of
$1$-Lipschitz functions $f:\mathbb{R}\to[0,1]$.

Formally, let $L(\mathbb{R})$ be the space of all functions $f:\mathbb{R}\to[0,1]$
satisfying the $1$-Lipschitz condition:
$$|f(s)-f(t)|\le|s-t|\,\;\,\text{ for all }\,s,t\in\mathbb{R}.$$
It is clear that $L(\mathbb{R})$ is a subset of $C(\mathbb{R})$.
An important fact is that it is compact and invariant:
\begin{lemma}The space $L(\mathbb{R})$ is a compact metric space
with respect to the distance \eqref{distance},
and is invariant under the action of translation
\eqref{translation}.\end{lemma}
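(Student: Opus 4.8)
The plan is to verify the two assertions separately, handling invariance as a quick check and devoting the bulk of the argument to compactness, which I would establish via the Arzel\`a--Ascoli theorem combined with a diagonal extraction. Throughout I would use that $(L(\mathbb{R}),d)$ is a metric space, where $d$ denotes the distance \eqref{distance}, so that compactness is equivalent to sequential compactness; here I would first note that $d$ is well-defined and finite on $L(\mathbb{R})$, since each summand satisfies $\max_{|t|\le n}|f(t)-g(t)|\le 1$ because all values lie in $[0,1]$, whence $d(f,g)\le 1$.

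For invariance, I would observe that for any $s\in\mathbb{R}$ and any $f\in L(\mathbb{R})$ the translate $g(t)=f(t+s)$ again maps $\mathbb{R}$ into $[0,1]$ and satisfies
$$|g(u)-g(v)|=|f(u+s)-f(v+s)|\le|u-v|\quad\text{for all }u,v\in\mathbb{R},$$
so $g\in L(\mathbb{R})$. Thus the translation \eqref{translation} carries $L(\mathbb{R})$ into itself.

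For compactness, I would first record the elementary but essential equivalence: a sequence $(f_k)$ converges in $d$ to $f$ if and only if $\max_{|t|\le n}|f_k(t)-f(t)|\to 0$ for every $n$, i.e.\ if and only if $f_k\to f$ uniformly on each interval $[-n,n]$. Then, given an arbitrary sequence $(f_k)\subset L(\mathbb{R})$, I would note that this family is uniformly bounded (all values in $[0,1]$) and \emph{uniformly} equicontinuous, the latter being an immediate consequence of the $1$-Lipschitz condition. Applying the Arzel\`a--Ascoli theorem on each compact interval $[-n,n]$ and performing a standard diagonal argument, I would extract a subsequence $(f_{k_j})$ converging uniformly on every $[-n,n]$ to some limit function $f$. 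Finally I would check that $f\in L(\mathbb{R})$: passing to the pointwise limit preserves both the bound $f(t)\in[0,1]$ and the inequality $|f(s)-f(t)|\le|s-t|$, and by the equivalence noted above the uniform convergence on each $[-n,n]$ gives $d(f_{k_j},f)\to 0$. This exhibits a convergent subsequence with limit inside $L(\mathbb{R})$, proving sequential compactness.

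I do not expect a genuine obstacle here; the one point to get right is that the $1$-Lipschitz hypothesis is exactly what supplies the (uniform) equicontinuity required by Arzel\`a--Ascoli, so the local uniform limits exist, and that the specific form of $d$ lets one promote these local limits to a single limit in the metric via the diagonal extraction.
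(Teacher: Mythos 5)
Your proof is correct and follows essentially the same route as the paper, which simply invokes the Arzel\`a--Ascoli theorem for compactness and notes that invariance is immediate from the definition; your write-up just supplies the details (uniform equicontinuity from the Lipschitz bound, diagonal extraction, closure of $L(\mathbb{R})$ under local uniform limits, and the equivalence of $d$-convergence with uniform convergence on the intervals $[-n,n]$) that the paper leaves implicit.
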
\begin{proof}
The former statement follows from the Arzela--Ascoli theorem,
and the latter statement follows immediately
from the definition of the space.\end{proof}Therefore,
under the action of translation \eqref{translation},
the space $L(\mathbb{R})$ is a flow,
and so is the product space $L(\mathbb{R})^\mathbb{N}$
which corresponds to the Hilbert cube
$([0,1]^\mathbb{N})^\mathbb{Z}=([0,1]^\mathbb{Z})^\mathbb{N}$.
We denote by $(L(\mathbb{R})^\mathbb{N},\mathbb{R})$
the translation on $L(\mathbb{R})^\mathbb{N}$.
Our main result is that the flow $(L(\mathbb{R})^\mathbb{N},\mathbb{R})$
is universal.\begin{theorem}[Main theorem]\label{main}
Any flow can be embedded in $(L(\mathbb{R})^\mathbb{N},\mathbb{R})$.
\end{theorem}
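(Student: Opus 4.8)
The plan is to build the embedding one coordinate at a time from a countable family of continuous functions separating the points of $X$, transport each along the orbits of the flow, and then smooth by convolution so that the result lands in $L(\mathbb{R})$. Concretely, fix a compatible metric $\rho$ on the compact space $X$ and a countable family $(g_n)_{n\in\mathbb{N}}$ of continuous functions $g_n\colon X\to[0,1]$ that separates points (for instance, normalized distance functions $g_n=\rho(\cdot,x_n)/\mathrm{diam}(X)$ to a countable dense subset $\{x_n\}$). Choose smooth nonnegative kernels $k_m$ with $\int_{\mathbb{R}}k_m=1$ supported in $[-\varepsilon_m,\varepsilon_m]$, where $\varepsilon_m\downarrow 0$, and set $a_m=\min\{1,\|k_m'\|_{L^1}^{-1}\}$. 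For each pair $(n,m)$ define a coordinate map $X\to L(\mathbb{R})$ by
$$F_{n,m}(x)(t)=a_m\int_{\mathbb{R}}g_n(T_u x)\,k_m(u-t)\,du,$$
and let $\Phi$ collect these coordinates into a map $X\to L(\mathbb{R})^{\mathbb{N}\times\mathbb{N}}$, which we identify with $L(\mathbb{R})^{\mathbb{N}}$ since the translation acts diagonally and $\mathbb{N}\times\mathbb{N}$ is countable.

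Next I would verify well-definedness, equivariance, and continuity. Since $0\le g_n\le1$, $k_m\ge0$ and $\int k_m=1$, each value $F_{n,m}(x)(t)$ lies in $[0,a_m]\subset[0,1]$; differentiating under the integral sign (legitimate as $k_m$ is smooth and compactly supported) gives $\partial_t F_{n,m}(x)(t)=-a_m\int g_n(T_u x)\,k_m'(u-t)\,du$, so $|\partial_t F_{n,m}(x)(t)|\le a_m\|k_m'\|_{L^1}\le1$ and hence $F_{n,m}(x)\in L(\mathbb{R})$. The same computation shows $F_{n,m}(x)\in C^\infty(\mathbb{R})$, which gives the additional $C^1$ assertion of the abstract for free. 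The substitution $v=u+s$ yields $F_{n,m}(T_s x)(t)=F_{n,m}(x)(t+s)$, i.e.\ $\Phi\circ T_s=S_s\circ\Phi$ for the translation $S_s$ of \eqref{translation}. For continuity, on any compact $t$-interval the $u$-integration runs over a fixed compact set on which $(u,x)\mapsto g_n(T_u x)$ is uniformly continuous, so $x_j\to x$ forces $F_{n,m}(x_j)\to F_{n,m}(x)$ uniformly on compacta, that is, in the metric \eqref{distance}; hence $\Phi$ is continuous into the product.

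The heart of the argument is injectivity, and this is where the $1$-Lipschitz constraint genuinely bites: to separate points I want the coordinates to recover the pointwise values $g_n(x)$, which requires the kernels to concentrate ($\varepsilon_m\to0$), but concentrating a probability kernel forces $\|k_m'\|_{L^1}\to\infty$ and would destroy the Lipschitz bound. The device resolving this tension is the amplitude factor $a_m$, which keeps every coordinate $1$-Lipschitz while rescaling its range by a known positive constant. Thus if $\Phi(x)=\Phi(x')$, then for each $(n,m)$, evaluating at $t=0$ and dividing by $a_m>0$ gives $\int g_n(T_u x)\,k_m(u)\,du=\int g_n(T_u x')\,k_m(u)\,du$; letting $m\to\infty$ and using continuity of $u\mapsto g_n(T_u x)$ at $u=0$ (together with $T_0=\mathrm{id}$) yields $g_n(x)=g_n(x')$ for every $n$, whence $x=x'$ because $(g_n)$ separates points.

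Finally, $\Phi$ is a continuous injection from the compact space $X$ into the metrizable, hence Hausdorff, space $L(\mathbb{R})^{\mathbb{N}}$, so it is automatically a homeomorphism onto its image; combined with equivariance this makes $\Phi$ the desired equivariant embedding. I expect the only delicate points to be the uniform-continuity bookkeeping in the continuity step and, above all, the balance between kernel concentration and the Lipschitz bound in the injectivity step, which the amplitude scaling $a_m$ is precisely designed to handle.
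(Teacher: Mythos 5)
Your proof is correct, and its skeleton coincides with the paper's: both arguments turn each point $x\in X$ into the orbit functions $t\mapsto g_n(T_t x)$ of a countable separating family (in the paper this appears as the equivariant map $\phi(x)(t)=\psi(\varphi_t(x))$, where $\psi$ is a topological embedding of $X$ into $[0,1]^{\mathbb{N}}$), then mollify these functions at a sequence of shrinking scales so as to land in $L(\mathbb{R})$, and finally invoke the fact that a continuous injection of a compact space into a Hausdorff space is an embedding. The differences lie in the choice of mollifier and in the injectivity argument. The paper convolves with an un-normalized box kernel, setting $F_i^j(f)(t)=\int_t^{t+r_j}f_i(s)\,\mathrm{d}s$ with $r_j=1/(j+1)$; there the scale $r_j$ automatically caps the amplitude, the bound $0\le f_i\le 1$ yields the $1$-Lipschitz property with no derivative estimates at all, and injectivity is obtained by locating an interval on which one coordinate strictly dominates another and integrating over a window of length $r_j$ inside it. You instead use smooth probability kernels, which forces you to introduce the amplitude factor $a_m$ to offset $\|k_m'\|_{L^1}$, and your injectivity runs through the approximate-identity property, recovering the pointwise values $g_n(x)$ in the limit $m\to\infty$; both routes are sound, yours being marginally less elementary (differentiation under the integral sign) but conceptually cleaner at the injectivity step. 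The payoff of your kernel choice is genuine: since each $F_{n,m}(x)$ is a convolution against a $C_c^\infty$ kernel, your image functions are $C^\infty$, not merely $C^1$ as in the paper's closing remark; in other words, your construction shows directly that every flow embeds equivariantly in $\left(L(\mathbb{R})\cap C^\infty(\mathbb{R})\right)^\mathbb{N}$, a statement the paper asserts to be true but says ``requires a different construction'' to be published elsewhere. Your smooth-kernel variant is precisely such a construction, so in this respect your proof is strictly stronger than the one in the paper.
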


\section{A constructive proof of the main theorem}
\begin{proof}[Proof of Theorem \ref{main}]
Let $(X,\mathbb{R})$ be a flow.
For convenience we denote by $(T_t)_{t\in\mathbb{R}}$
the translation on $C(\mathbb{R})$ or $C(\mathbb{R})^\mathbb{N}$.
We can treat $C(\mathbb{R})^\mathbb{N}$ as
the space of continuous functions
$f:\mathbb{R}\to[0,1]^\mathbb{N}$.

\medskip

We can assume that $X$ is a compact invariant subset of
$C(\mathbb{R})^\mathbb{N}$.
In fact, since $X$ is a compact metric space,
there is a topological embedding $\psi$
of $X$ in $[0,1]^\mathbb{N}$,
namely,
a homeomorphism $\psi:X\to\psi(X)\subset[0,1]^\mathbb{N}$.
We write $(X,\mathbb{R})=(X,(\varphi_t)_{t\in\mathbb{R}})$
and define a map
$\phi:X\to C(\mathbb{R})^\mathbb{N}$ as follows:
For every $x\in X$, we define
$\phi(x):\mathbb{R}\to[0,1]^\mathbb{N}$ by
$\phi(x)(t)=\psi(\varphi_t(x))$ for all $t\in\mathbb{R}$.
It is clear that
for every $x\in X$,
$\phi(x)$ is in $C(\mathbb{R})^\mathbb{N}$,
and that the map $\phi:X\to\phi(X)$ is one-to-one.
To see that $\phi$ is continuous,
we fix a point $x\in X$ and take a compact subset $A$ of $\mathbb{R}$ and
a sequence of points $x_n\in X$ tending to $x$ as $n\to+\infty$.
Since $X$ and $A\times X$ are compact,
the maps $\psi$ and $\varphi$ are uniformly continuous on $X$ and $A\times X$,
respectively.
Thus,
when $n$ is large enough,
$\varphi_t(x_n)$
is sufficiently close to
$\varphi_t(x)$
for all $t\in A$,
which implies that
$\phi(x_n)(t)=\psi(\varphi_t(x_n))$
is sufficiently close to
$\phi(x)(t)=\psi(\varphi_t(x))$
for all $t\in A$.
This shows that the sequence
$\phi(x_n)\in C(\mathbb{R})^\mathbb{N}$
uniformly tends to $\phi(x)\in C(\mathbb{R})^\mathbb{N}$
on $A$ as $n\to+\infty$.
Therefore the map
$\phi$ is continuous.
Since $X$ is compact and
$\phi:X\to\phi(X)$ is continuous and one-to-one,
we have that the map $\phi:X\to\phi(X)$ is a homeomorphism.
Meanwhile,
since for any $x\in X$ and $r\in\mathbb{R}$
it holds that
$$\phi(\varphi_r(x))(t)=\psi(\varphi_t(\varphi_r(x)))=\psi(\varphi_{t+r}(x))
=\phi(x)(t+r)=T_r(\phi(x))(t)$$
for all $t\in\mathbb{R}$,
the map $\phi$ is equivariant.
Thus,
as we stated,
it suffices to deal with the case that
$X$ is a compact invariant subset of
$C(\mathbb{R})^\mathbb{N}$.

\medskip

For each $j\in\mathbb{N}$ put $r_j=1/(j+1)$.
For any $j\in\mathbb{N}$ and $i\in\mathbb{N}$ with $i\le j$
we define a map $F_i^j:C(\mathbb{R})^\mathbb{N}\to L(\mathbb{R})$
as follows:
For every $f=\left(f_i\right)_{i\in\mathbb{N}}\in C(\mathbb{R})^\mathbb{N}$,
let
$$F_i^j(f)(t)=\int_t^{t+r_j}\,f_i(s)\;\text{d}s$$
for all $t\in\mathbb{R}$.

Since it holds that
$0\le F_i^j(f)\le r_j\le1$
and that
\begin{align*}
\left|F_i^j(f)(t)-F_i^j(f)(t')\right|
&=\left|\int_{t'+r_j}^{t+r_j}\,f_i(s)\;\text{d}s\,-\int_{t'}^t\,f_i(s)\;\text{d}s\,\right|\\
&\le\max\left\{\int_{t'+r_j}^{t+r_j}\,f_i(s)\;\text{d}s,\,\;\int_{t'}^t\,f_i(s)\;\text{d}s\right\}\\
&\le\left|t-t'\right|
\end{align*}
for all $t,t'\in\mathbb{R}$,
the image $F_i^j(f)$ is indeed in $L(\mathbb{R})$.

Now we define a map
$F:C(\mathbb{R})^\mathbb{N}\to L(\mathbb{R})^\mathbb{N}$
by
\begin{align*}
F(f)&=\left(F_i^j(f)\right)^{j\in\mathbb{N}}_{i\in\mathbb{N},i\le j}
=\left(\left(F_i^j(f)\right)_{1\le i\le j}\right)_{j\ge1}\\
&=\left(F_1^1(f),F_1^2(f),F_2^2(f),F_1^3(f),F_2^3(f),F_3^3(f),\dots,
F_1^j(f),F_2^j(f),\dots,F_j^j(f),\dots\right)
\end{align*}
for all $f\in C(\mathbb{R})^\mathbb{N}$.
We are going to show that
the map $F:X\to L(\mathbb{R})^\mathbb{N}$
is an equivariant topological embedding
of $X$ in $L(\mathbb{R})^\mathbb{N}$.

\medskip

For every $F_i^j$ and $r\in\mathbb{R}$,
we have
$$F_i^j(T_rf)(t)=\int_t^{t+r_j}\,f_i(s+r)\;\text{d}s\,
=\int_{t+r}^{t+r+r_j}\,f_i(s)\;\text{d}s\,
=F_i^j(f)(t+r)=T_rF_i^j(f)(t)$$
for all
$f=\left(f_i\right)_{i\in\mathbb{N}}\in C(\mathbb{R})^\mathbb{N}$
and $t\in\mathbb{R}$.
This shows that the map
$F:X\to L(\mathbb{R})^\mathbb{N}$ is equivariant.
It remains to check that
$F:X\to F(X)$ is a homeomorphism.

We first show that each map
$F_i^j:X\to F_i^j(X)$ is continuous.
To do this,
we take an interval $[a,b]$ and a sequence of functions
$f^{(n)}=(f_k^{(n)})_{k\in\mathbb{N}}\in C(\mathbb{R})^\mathbb{N}$
uniformly converging to
$g=(g_k)_{k\in\mathbb{N}}\in C(\mathbb{R})^\mathbb{N}$
on the interval $[a,b+1]$
as $n\to+\infty$.
This implies that the sequence $f_i^{(n)}\in C(\mathbb{R})$
tends to $g_i\in C(\mathbb{R})$ uniformly on the interval
$[a,b+1]$ as $n\to+\infty$.
Therefore the sequence
$$F_i^j(f^{(n)})(t)=\int_t^{t+r_j}\,f^{(n)}_i(s)\;\text{d}s$$
converges to
$$F_i^j(g)(t)=\int_t^{t+r_j}\,g_i(s)\;\text{d}s$$
uniformly on $t\in[a,b]$ as $n\to+\infty$.
Hence $F_i^j:X\to F_i^j(X)$ is continuous.
So $F:X\to F(X)$ is continuous.

To show that
$F:X\to F(X)$ is one-to-one,
we take
$f=\left(f_i\right)_{i\in\mathbb{N}},g=\left(g_i\right)_{i\in\mathbb{N}}
\in C(\mathbb{R})^\mathbb{N}$
and suppose that $f\ne g$.
Without loss of generality,
there exist $i\in\mathbb{N}$
and two real numbers $a<b$ such that
$f_i(s)>g_i(s)$ for all $a<s<b$.
Choose $t\in\mathbb{R}$ and a natural number $j>i$
with $a<t<t+r_j<b$.
It follows that
$$F_i^j(f)(t)=\int_t^{t+r_j}\,f_i(s)\;\text{d}s\,
>\int_t^{t+r_j}\,g_i(s)\;\text{d}s\,=F_i^j(g)(t),$$
which implies that $F(f)\ne F(g)$.
Thus,
$F:X\to F(X)$ is one-to-one.

Finally,
since $X$ is compact and
$F:X\to F(X)$ is continuous and one-to-one,
we know that the map $F:X\to F(X)$ is a homeomorphism,
which completes the proof.\end{proof}

We would like to remark that
the above constructive proof indicates that
the universal space actually can be chosen
as $\left(L(\mathbb{R})\cap C^1(\mathbb{R})\right)^\mathbb{N}$.
Moreover, replacing $C(\mathbb{R})$ by $L(\mathbb{R})\cap C^1(\mathbb{R})$
and iterating the process, we see that
$\left(L(\mathbb{R})\cap C^n(\mathbb{R})\right)^\mathbb{N}$
is universal for any finite positive integer $n$.
One may ask if $\left(L(\mathbb{R})\cap C^\infty(\mathbb{R})\right)^\mathbb{N}$
is universal.
The answer is positive.
But it requires a different construction
that we plan on publishing elsewhere.
We keep the present paper
in the very elementary mathematical analysis level.

\bigskip

\textbf{Acknowledgements.}
We thank the anonymous referee for his/her helpful
comments/suggestions.
L.J. was partially supported by NNSF of China (11371339 and 11571335).

\medskip

\end{document}